\documentclass[12pt,leqno]{article}
\usepackage{amsfonts,amsthm,amsmath}
\newtheorem{thm}{Theorem}
\newtheorem{prop}[thm]{Proposition}
\newtheorem{lem}[thm]{Lemma}
\newtheorem{cor}[thm]{Corollary}
\theoremstyle{remark}
\newtheorem{rem}[thm]{Remark}

\newcommand{\ZZ}{\mathbb{Z}}
\newcommand{\RR}{\mathbb{R}}

\newcommand{\0}{\mathbf{0}}
\newcommand{\cF}{\mathcal{F}}

\DeclareMathOperator{\sm}{smin}

\begin{document}

\title{An Optimal Odd Unimodular Lattice in Dimension 
$72$\footnote{This work was supported by JST PRESTO program.}}

\author{
Masaaki Harada\thanks{
Department of Mathematical Sciences,
Yamagata University,
Yamagata 990--8560, Japan, and
PRESTO, Japan Science and Technology Agency (JST), Kawaguchi,
Saitama 332--0012, Japan. 
email: mharada@sci.kj.yamagata-u.ac.jp}
and 
Tsuyoshi Miezaki\thanks{Department of Mathematics, 
Oita National College of Technology, 
1666 Oaza-Maki, Oita, 870--0152, Japan. email: miezaki@oita-ct.ac.jp
}
}

\maketitle

\begin{abstract}
It is shown that 
if there is an extremal even unimodular lattice in dimension $72$,
then there is an optimal odd unimodular lattice in that dimension.
Hence, the first example of an optimal odd unimodular lattice in 
dimension $72$ is constructed from the extremal even unimodular lattice
which has been recently found by G. Nebe.
\end{abstract}

{\small
\noindent
{\bfseries Key Words:}
optimal unimodular lattice, odd unimodular lattice, theta series 

\noindent
2000 {\it Mathematics Subject Classification}. Primary 11H06; Secondary 94B05.\\ \quad
}

\section{Introduction}
A (Euclidean) lattice $L \subset \RR^n$ 
in dimension $n$
is {\em unimodular} if
$L = L^{*}$, where
the dual lattice $L^{*}$ of $L$ is defined as
$\{ x \in {\RR}^n \mid (x,y) \in \ZZ \text{ for all }
y \in L\}$ under the standard inner product $(x,y)$.
A unimodular lattice is called {\em even} 
if the norm $(x,x)$ of every vector $x$ is even.
A unimodular lattice which is not even is called
{\em odd}.
An even unimodular lattice in dimension $n$ exists if and 
only if $n \equiv 0 \pmod 8$, while 
an odd  unimodular lattice exists for every dimension.
Two lattices $L$ and $L'$ are {\em neighbors} if
both lattices contain a sublattice of index $2$
in common.

Rains and Sloane~\cite{RS-bound} showed that
the minimum norm $\min(L)$ of a unimodular
lattice $L$ in dimension $n$
is bounded by
$\min(L) \le 2 \lfloor n/24 \rfloor+2$
unless $n=23$ when $\min(L) \le 3$.
We say that a unimodular lattice meeting the upper
bound is {\em extremal}.
Gaulter~\cite{Gaulter} showed that
any unimodular lattice in dimension $24k$
meeting the upper
bound has to be even, which was conjectured by
Rains and Sloane.
Hence,  an odd unimodular lattice $L$
in dimension $24k$
satisfies $\min(L) \le 2k+1$.
We say that an odd unimodular lattice $L$ in dimension $24k$
with $\min(L)=2k+1$ is {\em optimal}.

Shadows of odd unimodular lattices appeared 
in~\cite{CS90-AMS} and \cite{CS-odd},
and shadows play an important role in the study of odd unimodular lattices.
For example, shadows are the main tool in \cite{RS-bound}.
Let $L$ be an odd unimodular lattice and let $L_0$ be the 
subset of vectors of even norm.
Then $L_0$ is a sublattice of $L$ of index 2. The {\em shadow} of $L$ is 
defined as
$S(L) = L_0^ * \setminus L$.
We define the {\em shadow minimum} of $L$
as $\sm(L) = \min \{ (x,x) \mid x\in S(L) \} $.

The aim of this note is to show the following:

\begin{thm}\label{thm:main}
If there is an extremal even unimodular lattice $\Lambda$ in dimension 
$72$, then there is an optimal odd unimodular lattice $L$ 
in dimension $72$ with $\sm(L) = 2$, which is a
neighbor of $\Lambda$.
\end{thm}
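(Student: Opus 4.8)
The plan is to build $L$ as a $2$-neighbor of $\Lambda$ by halving a minimal vector into the shadow. Since $\Lambda$ is extremal, $\min(\Lambda)=8$; fix $w\in\Lambda$ with $(w,w)=8$. As $w\notin 2\Lambda$ (otherwise $w/2\in\Lambda$ would have norm $2$), there is $y\in\Lambda$ with $(w,y)$ odd, and I set $c=w+2y$, so that $(c,c)=8+4(w,y)+4(y,y)\equiv 4\pmod 8$ and $c\equiv w\not\equiv 0\pmod{2\Lambda}$. Let $N=\{x\in\Lambda:(x,c)\in 2\ZZ\}$, an index-$2$ sublattice, and put $L=N+\ZZ\,(c/2)$. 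Since $(x,c/2)\in\ZZ$ for $x\in N$ and $(c/2,c/2)=(c,c)/4\in\ZZ$, the lattice $L$ is integral; a determinant count ($\det N=4$, $[L:N]=2$) gives $\det L=1$. As $(c/2,c/2)$ is odd, $L$ is an \emph{odd} unimodular lattice with even part $L_0=N$, and $\Lambda,L$ are neighbors through $N$.

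I would first dispose of the shadow. Writing $N^*/N\cong(\ZZ/2)^2$ with cosets represented by $0$, a vector $d\in\Lambda\setminus N$, $c/2$, and $c/2+d$, one has $\Lambda=N\cup(N+d)$, $L=N\cup(N+c/2)$, and hence $S(L)=L_0^*\setminus L=(N+d)\cup(N+d+c/2)$. The coset $N+d\subset\Lambda$ has minimum at least $8$, and every vector of $N+d+c/2$ has even norm, so all shadow vectors have even norm and are nonzero. Since $(y,w)$ is odd we get $-y\in\Lambda\setminus N$, whence $w/2=c/2-y\in N+d+c/2\subset S(L)$ has norm $2$. Therefore $\sm(L)=2$.

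The crux is $\min(L)=7$. Gaulter's theorem gives $\min(L)\le 7$, and $N\subset\Lambda$ has minimum $\ge 8$, so the minimum is realized by odd vectors of $N+c/2$; it suffices to exclude norms $1,3,5$ there. An odd vector $u\in N+c/2$ with $(u,u)=m$ is nonzero (as $c/2\notin\Lambda$) and yields $2u\in\Lambda$ with $(2u,2u)=4m$ and $2u\equiv c\equiv w\pmod{2\Lambda}$; so a vector of norm $1,3,5$ would force a \emph{nonzero} vector of $\Lambda$ of norm $4,12,20$ lying in the coset $w+2\Lambda$. Norm $4$ is impossible outright, since $\min(\Lambda)=8$. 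For norms $12$ and $20$, a vector $w+2v\in\Lambda$ of that norm satisfies $(w,v)+(v,v)=1$ resp.\ $3$, and the Cauchy--Schwarz bound $|(w,v)|\le\sqrt{8(v,v)}$ restricts $v$ to a handful of cases: $(v,v)=8,(w,v)=-7$ in the first case, and $(v,v)\in\{8,10,12\}$ with $(w,v)\in\{-5,-7,-9\}$ in the second. In every one of these cases the companion $w+v\in\Lambda$ is nonzero of norm $6$, $2$, or $2$, each strictly below $8$ --- contradicting extremality. Hence $N+c/2$ has minimum $7$, so $\min(L)=7$, and $L$ is optimal because $2\lfloor 72/24\rfloor+1=7$.

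The real content is the last paragraph: the short vectors that halving a minimal vector might create correspond exactly to pairs of vectors of $\Lambda$ at forbiddenly negative inner product, and the extremality of $\Lambda$ (no nonzero vectors of norm $2,4,6$) is precisely what annihilates every such pair. The only genuine obstacle I anticipate is recognizing this mechanism and organizing the finite inner-product case analysis; the remaining points --- integrality and unimodularity of $L$, the identification $L_0=N$, and the placement of $w/2$ in the correct coset of $S(L)$ --- are routine once the halving-into-the-shadow construction is fixed. Since none of this uses special features of $\Lambda$ beyond extremality, the argument applies to any hypothetical extremal even unimodular lattice in dimension $72$, giving the conditional statement.
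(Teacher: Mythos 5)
Your proof is correct, and your construction is in fact identical to the paper's: since $(v,c)\equiv(v,w)\pmod 2$ for $v\in\Lambda$, your $N$ is exactly the paper's $\Lambda_x^+=\{v\in\Lambda\mid (x,v)\equiv 0\pmod 2\}$ (with $x=w$), and your glue vector $c/2=\tfrac12 w+y$ is the paper's $\tfrac12 x+y$; the identification of $w/2$ as a norm-$2$ shadow vector is likewise the same (your coset bookkeeping in $N^*/N$ is actually more explicit than the paper's, which leaves the lower bound $\sm(L)\ge 2$ essentially implicit). Where you genuinely diverge is the crux, $\min(L)\ge 7$. The paper argues in one stroke: for $u$ in the odd coset, $(u,\tfrac12 x)\in\tfrac12+\ZZ$, so after possibly replacing $u$ by $-u$ one may assume $(u,\tfrac12 x)\le-\tfrac12$; then $u+\tfrac12 x$ is a nonzero vector of $\Lambda$ of norm at most $(u,u)+1$, whence $(u,u)\ge 7$. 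You instead pass to $2u=w+2v\in w+2\Lambda$ of norm $4m$ and exclude $m=1,3,5$ by a Cauchy--Schwarz case analysis; your ``companion'' $w+v$ is precisely the paper's $u+\tfrac12 x$, so the underlying mechanism coincides, but the paper's sign-flip handles all odd norms uniformly while you need a finite enumeration (both proofs then invoke the Gaulter/Rains--Sloane bound $\min(L)\le 7$ to turn $\ge 7$ into equality). Your enumeration is sound, though the companion norms you report are slightly off: the four cases $((v,v),(w,v))=(8,-7)$ from norm $12$, and $(8,-5),(10,-7),(12,-9)$ from norm $20$, give companion norms $2$, $6$, $4$, $2$ respectively, not ``$6$, $2$, or $2$'' --- a harmless slip, since all are nonzero of norm less than $8$ and the contradiction with extremality stands in every case.
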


Recently Nebe~\cite{Nebe72} has found an
extremal even unimodular lattice in dimension $72$.
It was a long-standing question to determine the
existence of such a lattice.
As a consequence of Theorem~\ref{thm:main}, 
we have the following:

\begin{cor}
There is an optimal odd unimodular lattice $L$
in dimension $72$  with $\sm(L)=2$.
\end{cor}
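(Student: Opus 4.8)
Granting Theorem~\ref{thm:main}, the Corollary is immediate: Nebe~\cite{Nebe72} supplies an extremal even unimodular lattice $\Lambda$ in dimension $72$, and Theorem~\ref{thm:main} then produces the required $L$. So the content lies in Theorem~\ref{thm:main}, and I sketch a direct neighbor construction for it. The plan is to build $L$ as an odd unimodular neighbor of $\Lambda$ starting from a single minimal vector, and to show that the minimum-$8$ property of $\Lambda$ forces both $\min(L)=7$ and $\sm(L)=2$ automatically.

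Fix $t\in\Lambda$ with $(t,t)=8$ (these exist since $\min(\Lambda)=8$); such $t$ is primitive, so using $\Lambda=\Lambda^*$ I may choose $x_0\in\Lambda$ with $(t,x_0)=1$. Put $v=t+2x_0$ and $M=\{x\in\Lambda:(x,v)\in 2\ZZ\}$. Since $(x_0,v)$ is odd, $M$ has index $2$ in $\Lambda$, and I set $L=M+\ZZ\frac{v}{2}$. A direct check (using $(\frac v2,m)=\frac{(v,m)}2\in\ZZ$ for $m\in M$) shows $L$ is integral with $[M^*:M]=4$ and $[M^*:L]=2$, hence $\det L=1$ and $L$ is unimodular; since $(\frac v2,\frac v2)=\frac{(v,v)}4=3+(x_0,x_0)$ is odd, $L$ is odd, and $L$ shares the index-$2$ sublattice $M$ with $\Lambda$, so $L$ is a neighbor of $\Lambda$. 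Here $L_0=M$, and $M^*$ splits into the four cosets $M,\ v/2+M,\ u+M,\ u+v/2+M$ with $u\in\Lambda\setminus M$; thus $\Lambda=M\cup(u+M)$ and $S(L)=(u+M)\cup(u+v/2+M)$.

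Next I would verify the two minima. For the shadow, $t/2=\frac v2-x_0$ with $(x_0,v)$ odd forces $t/2\in u+v/2+M\subseteq S(L)$, while $(t/2,t/2)=2$; as every shadow vector has even norm and the coset $u+M\subseteq\Lambda$ has minimum $\ge 8$, this gives $\sm(L)=2$. For $\min(L)$, the vectors of $M$ have even norm $\ge 8$, so it suffices to bound the coset $v/2+M$, all of whose vectors have odd norm. Writing $y=x_0+m$, so that $y$ runs over $\Lambda\setminus M$ as $m$ runs over $M$, one gets
\[
\left(\tfrac v2+m,\tfrac v2+m\right)=\left(\tfrac t2+y,\tfrac t2+y\right)=2+(t,y)+(y,y),\qquad (t,y)\ \text{odd}.
\]
The crux is the inequality $(t,y)+(y,y)\ge 5$ for every such $y$. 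This is exactly where extremality enters: $y\neq 0$ gives $(y,y)\ge 8$, and $t\pm y\neq 0$ together with $\min(\Lambda)=8$ give $(y,y)\ge 2|(t,y)|$; hence if $(t,y)\ge -3$ then $(t,y)+(y,y)\ge -3+8=5$, while if $(t,y)\le -5$ then $(t,y)+(y,y)\ge -(t,y)\ge 5$. Therefore every vector of $v/2+M$ has norm $\ge 7$, so $\min(L)\ge 7$.

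Finally, $L$ is odd unimodular in dimension $72$, so by Gaulter's theorem~\cite{Gaulter} it cannot meet the extremal bound $8$; thus $\min(L)\le 7$, forcing $\min(L)=7$, i.e.\ $L$ is optimal. The only genuine obstacle is the coset inequality above: a priori the odd coset $v/2+M$ could contain a short vector of norm $1$, $3$, or $5$, which would destroy optimality, and the whole point is that the minimum-$8$ property of $\Lambda$ rules this out for every choice of $t$ and $x_0$. Assembling these observations gives Theorem~\ref{thm:main}, and the Corollary follows.
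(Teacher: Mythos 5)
Your proposal is correct and takes essentially the same route as the paper: the Corollary is obtained by feeding Nebe's lattice into Theorem~\ref{thm:main}, and your construction $L=M+\ZZ\frac{v}{2}$ is literally the paper's $\Gamma_{x,y}$ with $x=t$, $y=x_0$ (note $M=\Lambda_t^{+}$, since $(z,2x_0)$ is always even, and $\frac{v}{2}=\frac{t}{2}+x_0$). The two key verifications also coincide in substance: your three-vector case analysis with $y$, $t\pm y$ is the paper's WLOG-sign argument applied to $u\pm\frac{x}{2}$ together with $\min(\Lambda)=8$, and your norm-$2$ shadow vector $\frac{t}{2}$ is exactly the paper's $\frac{1}{2}x$.
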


\section{An optimal odd unimodular lattice in dimension 72}

The theta series $\theta_{L}(q)$ of a lattice $L$ is 
the formal power series
$\theta_{L}(q) = \sum_{x \in L} q^{(x, x)}$.
Conway and Sloane~\cite{CS90-AMS, CS-odd} showed that
when the theta series of an odd unimodular lattice $L$
in dimension $n$ is written as 
\begin{equation}\label{eq:CS1}
\theta_L(q)=
 \sum_{j =0}^{\lfloor n/8\rfloor} a_j\theta_3(q)^{n-8j}\Delta_8(q)^j,
\end{equation}
the theta series of the shadow $S(L)$ is written as
\begin{equation}\label{eq:CS2}
\theta_S(q)= \sum_{j=0}^{\lfloor n/8\rfloor}
\frac{(-1)^j}{16^j} a_j\theta_2(q)^{n-8j}\theta_4(q^2)^{8j}
= \sum_i B_i q^i \text{ (say),}
\end{equation}
where 
$\Delta_8(q) = q \prod_{m=1}^{\infty} (1 - q^{2m-1})^8(1-q^{4m})^8$
and $\theta_2(q), \theta_3(q)$ and $\theta_4(q)$ are the Jacobi 
theta series~\cite{SPLAG}.
As the additional conditions, it follows 
from~\cite{CS90-AMS} and~\cite{CS-odd} that
\begin{equation}
\label{eq:C}
\begin{cases}
\text{$B_r =0$ unless $r \not\equiv n/4 \pmod 2$},\\
\text{there is at most one nonzero $B_r$ for $r < (\min(L)+2)/2$},\\
\text{$B_r=0$ for $r < \min(L)/4$,}\\
\text{$B_r \le 2$ for $r < \min(L)/2$.}
\end{cases}
\end{equation}

\begin{lem}\label{lem:T}
Let $L$ be an optimal odd unimodular lattice in dimension $72$
with $\sm(L)=2$.
Then the theta series of $L$ and $S(L)$ are uniquely
determined as 
\begin{align}
\label{eq:T1}
\theta_{L}(q) =&
1 + 27918336 q^7 + 3165770864 q^8 + \cdots, \\
\label{eq:T2}
\theta_S(q) =&
2 q^2 + 127800 q^6 + \cdots,
\end{align}
respectively.
\end{lem}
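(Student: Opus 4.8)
The plan is to treat the ten coefficients $a_0,\dots,a_9$ in the expansion \eqref{eq:CS1} (with $n=72$, so $\lfloor n/8\rfloor=9$) as unknowns and to show that optimality together with $\sm(L)=2$ forces ten linear conditions that pin them all down. First, since $\0$ is the only vector of norm $0$, comparing constant terms in \eqref{eq:CS1} gives $a_0=1$, because $\Delta_8(q)^j$ is divisible by $q^j$ while $\theta_3(q)^{72-8j}$ has constant term $1$. Optimality means $\min(L)=2\cdot 3+1=7$, so the coefficient of $q^r$ in $\theta_L(q)$ vanishes for $1\le r\le 6$. As $\Delta_8(q)^j=q^j+\cdots$, the coefficient of $q^r$ involves only $a_0,\dots,a_r$, so these six vanishing conditions determine $a_1,\dots,a_6$ successively in a triangular fashion.

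It remains to fix $a_7,a_8,a_9$, for which I turn to the shadow series \eqref{eq:CS2}. Each summand there is an even power of $\theta_2(q)$ times a power of $\theta_4(q^2)$, so only even powers of $q$ occur and every $B_r$ with $r$ odd vanishes; moreover the $j$-th summand has lowest-degree term of order $q^{18-2j}$, so $B_r$ receives contributions only from those $j$ with $18-2j\le r$. In particular $B_0$ involves only $a_9$, $B_2$ only $a_8,a_9$, and $B_4$ only $a_7,a_8,a_9$. From \eqref{eq:C} with $\min(L)=7$ I read off $B_r=0$ for $r<7/4$ (so $B_0=0$), that at most one $B_r$ is nonzero for $r<9/2$ (so, after discarding the odd indices, at most one of $B_0,B_2,B_4$ survives), and that $B_r\le 2$ for $r<7/2$ (so $B_2\le 2$). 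Since $\sm(L)=2$ forces $B_2\ne 0$, we obtain $B_0=B_4=0$. Finally, as $S(L)=L_0^{*}\setminus L$ is stable under $x\mapsto -x$ and omits $\0$, each $B_r$ is even, whence $B_2=2$.

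These three relations $B_0=0$, $B_2=2$, $B_4=0$ form a triangular system whose nonzero diagonal entries are the leading coefficients of $\theta_2(q)^{72-8j}\theta_4(q^2)^{8j}$, so they determine $a_9$, then $a_8$, then $a_7$ uniquely; in particular one finds $a_9=0$. With all of $a_0,\dots,a_9$ now fixed, both \eqref{eq:CS1} and \eqref{eq:CS2} are completely determined, and expanding them to the required order yields \eqref{eq:T1} and \eqref{eq:T2}. The only genuinely non-formal point is the argument that $B_2$ is even: without it the bound $B_2\le 2$ and the inequality $B_2\ge 1$ would leave two possibilities, so the pairing of shadow vectors into $\{x,-x\}$ is exactly what makes the theta series \emph{unique} rather than merely constrained. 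The remaining work is the routine but somewhat heavy bookkeeping of expanding the relevant powers of $\theta_2,\theta_3,\theta_4$ and $\Delta_8$.
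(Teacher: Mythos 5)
Your proof is correct and follows essentially the same route as the paper: determine $a_0,\dots,a_6$ from $\min(L)=7$, then pin down $a_7,a_8,a_9$ via the shadow conditions $B_0=0$, $B_4=0$, and $B_2=2$, where the key point in both arguments is that the $\{x,-x\}$ pairing in $S(L)$ forces $B_2$ to be even and hence equal to $2$. The only cosmetic difference is that you obtain $B_0=0$ from the constraint $B_r=0$ for $r<\min(L)/4$ in (\ref{eq:C}), while the paper notes directly that $\0\notin S(L)$; both yield $a_9=0$.
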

\begin{proof}
In (\ref{eq:CS1}) and (\ref{eq:CS2}),
it follows from $\min(L)=7$ that
\begin{multline*}
a_0=1,
a_1=-144,
a_2=7056,
a_3=-136704,
\\
a_4=928656,
a_5=-1518336,
a_6=136704.
\end{multline*}
Since $S(L)$ does not have $\0$,
$a_9=0$.
Hence, we have the following possible theta series:
\begin{align}
\label{eq:T3}
\theta_{L}(q) =&
1 
+ (28901376 + a_7)q^7 
+ (3108623472 + a_8 - 24a_7 )q^8  + \cdots,\\
\label{eq:T4}
\theta_S(q )=&
\frac{a_8}{2^{24}} q^2
+\Big(\frac{-15 a_8}{2^{21}} - \frac{a_7}{2^{12}}\Big) q^4
+ \Big(136704 +  \frac{1767 a_8}{2^{22}} + \frac{3a_7}{2^7}\Big)q^6
+ \cdots,
\end{align}
If $x \in S(L)$ with $(x,x)=2$ then $-x \in S(L)$.
It follows from (\ref{eq:C}) that $B_2=2$ and $B_4=0$.
Hence, we have that
\[
a_7= -15 \cdot 2^{16}, a_8=2^{25}.
\]
Therefore, the theta series of $L$ and $S(L)$ are uniquely
determined.
\end{proof}


Now we start on the proof of Theorem~\ref{thm:main}.
Let $\Lambda$ be an extremal even unimodular lattice in dimension
$72$. Since $\Lambda$ has minimum norm $8$,
there exists a vector $x\in\Lambda$ with $(x,x)=8$.
Fix such a vector $x$.
Put
\[
\Lambda_x^{+}=\{v\in\Lambda\mid(x,v)\equiv0\pmod2\}.
\]
If $(x,y)$ is even for all vectors $y \in \Lambda$
then $\frac{1}{2}x \in \Lambda^*=\Lambda$ and
$(\frac{1}{2}x,\frac{1}{2}x)=2<\min(\Lambda)$, which is a contradiction.
Thus, $\Lambda_x^{+}$ is a sublattice of $\Lambda$ of index $2$,
and there exists a vector $y\in\Lambda$ such
that $(x,y)$ is odd. 
Fix such a vector $y$.
Define the lattice 
\[
\Gamma_{x,y}=
\Lambda_x^+ \cup \Big(\frac{1}{2}x+y\Big)+\Lambda_x^+. 
\]
It is easy to see that  $\Gamma_{x,y}$ is an odd unimodular
lattice, which is a neighbor of $\Lambda$.

We show that $\Gamma_{x,y}$ has minimum norm $7$.
Since $\min (\Lambda_x^+) \geq 8$,
it suffices to show that
$(u,u) \geq 7$
for all vectors $u \in (\frac{1}{2}x+y)+\Lambda_x^{+}$.
Let $u=\frac{1}{2}x+y+\alpha$ ($\alpha \in \Lambda_x^{+}$).
Then we have
\begin{equation}\label{eq:1}
\Big(u,\frac{1}{2}x\Big)=\Big(\frac{1}{2}x,\frac{1}{2}x\Big)
+\Big(y,\frac{1}{2}x\Big)+\Big(\alpha,\frac{1}{2}x\Big)
\in \frac{1}{2}+\ZZ.
\end{equation}
Here, we may assume without loss of generality that
$(u, \frac{1}{2}x) \leq -\frac{1}{2}$.
Then 
\[
\Big(u+\frac{1}{2}x, u+\frac{1}{2}x\Big)
=(u,u)+2+2\Big(u,\frac{1}{2}x\Big)
\leq(u,u)+1. 
\]
If $u+\frac{1}{2}x$ is the zero vector $\0$ then
$(u,\frac{1}{2}x)=-(\frac{1}{2}x,\frac{1}{2}x)=-2$,
which contradicts (\ref{eq:1}).
Hence, $u+\frac{1}{2}x$ is a nonzero vector in $\Lambda$.
Then we obtain $8 \leq (u,u)+1$.
Therefore, $\Gamma_{x,y}$ is an odd unimodular lattice
with minimum norm $7$, which is a neighbor of $\Lambda$.

It follows that $(\Gamma_{x,y})_0=\Lambda_x^{+}$.
For any vector $\alpha \in \Lambda_x^{+}$,
$(\frac{1}{2}x,\alpha)=\frac{1}{2}(x,\alpha) \in \ZZ$.
Hence, $\frac{1}{2}x$ is a vector of norm $2$ in $S(\Gamma_{x,y})$.
Therefore, we have Theorem~\ref{thm:main}.

\begin{rem}
A similar argument can be found in~\cite{Venkov}
for dimension $48$.
\end{rem}

By Lemma~\ref{lem:T},  the theta series of 
$\Gamma_{x,y}$ and $S(\Gamma_{x,y})$ are uniquely
determined as 
(\ref{eq:T1}) and (\ref{eq:T2}), respectively.

\begin{rem}\label{rem:Z8}
The extremal even unimodular lattice in~\cite{Nebe72},
which we denote by $N_{72}$,  
contains a sublattice 
$\{(x,\0,\0),(\0,y,\0),(\0,\0,z) \mid x,y,z \in L_{24}\}$,
where $L_{24}$ is isomorphic to $\sqrt{2}\Lambda_{24}$
and $\Lambda_{24}$ is the Leech lattice.
Since $\Lambda_{24}$ contains many $4$-frames,
$N_{72}$ contains many $8$-frames
(see e.g.\ \cite{BDHO,HMV} for undefined terms in this remark).
Take one of the vectors of an $8$-frame $\cF$ as $x$ in the construction of
$\Gamma_{x,y}$.
It follows that $\Gamma_{x,y} \supset \Lambda_x^+ \supset \cF$.
Therefore, there is a 
self-dual $\ZZ_8$-code $C_{72}$ of 
length $72$ and minimum Euclidean weight $56$
such that $\Gamma_{x,y}$ is isomorphic to 
the lattice obtained from $C_{72}$ by Construction A.
A generator matrix of $C_{72}$ can be obtained electronically from

\vspace{15pt}
http://sci.kj.yamagata-u.ac.jp/\~{}mharada/Paper/z8-72-I.txt
\end{rem}


\end{document}